\numberwithin{equation}{section}
\newtheorem{thm}{Theorem}[section]
\theoremstyle{definition}
\newtheorem{dfn}{Definition}[section]
\theoremstyle{remark}
\newtheorem{rmk}{Remark}[section]
\def\R{\mathbb{R}} 
\def\1{~\mbox{I\hspace{-.6em}1}} 
\def\E{\mathds{E}} %
\def\P{\mathds{P}} %
\def\v{\mbox{Var\,}}
\def\cov{\mbox{Cov}}%
\numberwithin{equation}{section}
\theoremstyle{plain}
\begin{document}

\title{Exponential inequalities for unbounded functions of geometrically ergodic Markov chains. Applications to quantitative error bounds for regenerative Metropolis algorithms.}
\date{}
\author{ Olivier Wintenberger\footnote{{\tt olivier.wintenberger@upmc.fr},
 Sorbonne Universit\'es, UPMC Univ Paris 06,
       LSTA, Case 158,
4 place Jussieu,
75005 Paris,
FRANCE 
\& Department of Mathematical Sciences,
University of Copenhagen, DENMARK}}

\maketitle

\begin{abstract}
The aim of this note is to investigate the concentration properties of unbounded functions of geometrically ergodic Markov chains. We derive concentration properties of centered functions with respect to the square of Lyapunov's function in the drift condition satisfied by the Markov chain. We apply the new exponential inequalities to derive confidence intervals for MCMC algorithms. Quantitative error bounds are provided for the regenerative Metropolis algorithm of \cite{brockwell:kadane:2005}.
\end{abstract}


{\small {\bf Keywords:} Markov chains, exponential inequalities, Metropolis algorithm, Confidence interval.}


\section{Introduction}\label{sec1}

At the conference in honor of Paul Doukhan, J\'er\^ome Dedecker presented the new Hoeffding inequality  \cite{dedecker:gouezel:2015} for functions $f$ of a geometric ergodic Markov chain $(X_k)$, $1\le k\le n$. Using a similar counter example as in Section 3.3 of \cite{adamczak:2008}, he  showed that the boundedness condition on the function $f$ is necessary to obtain such exponential inequalities for functions of geometrically ergodic Markov chain.\\

In this note, we introduce new  deviation inequalities for relaxing the boundedness condition. We extend the framework of \cite{dedecker:gouezel:2015} by considering concentration properties of $f$ involving a second order term that depends on the Markov chain. Such exponential inequalities are called {\em empirical} Bernstein inequalities and used to derive observable confidence intervals in the machine learning literature \cite{audibert:2007}. As the second order term is an over-estimator of the asymptotic variance, the new inequality \eqref{mres} is also closely related to the self-normalized concentration inequalities studied in \cite{delapena:klass:lai:2004}. The novelty of our result is the appearance of the second-order term depending on the squares of the Lyapunov function $V$ in the drift condition \eqref{eq:geom} satisfied by the Markov chain.\\

The new deviation inequality \eqref{mres} is remarkably simple as there is only one correction term, an empirical  over-estimator of the variance. Even for bounded functions, existing Bernstein-type inequalities for ergodic Markov chains contain an extra logarithmic correction term compared with the iid case; see \cite{bertail:clemencon:2010} and \cite{adamczak:bednorz:2015}. This correction term, coming from the concentration properties of the regeneration length, is necessary \cite{adamczak:2008}. For unbounded functions, another correction term  appears in Fuk-Nagaev-type inequalities due to the marginal tails. This correction term is also necessary in the iid case for additive functionals; see \cite{nagaev:1963}. Thus, the observable second order term in the empirical Bernstein inequality \eqref{mres} encompasses three necessary corrections due to different causes: the asymptotic variance, the random regeneration length and the heavy-tailed marginal distribution. The price to pay are the constants in \eqref{mres} that become enormous even in toy examples.\\

We apply our result to the construction of confidence intervals for some MCMC algorithms. Previous studies are based on a two-step reasoning:  first, some bounds are derived with unknown constants, via Chebyshev or Hoeffding inequalities; see \cite{latuszynski:miasojedow:niemiro:2013} and \cite{gyori:paulin:2012} respectively. The second step consists of over-estimating the constants. Our new empirical exponential inequality provides concentration properties thanks to a second-order term that is observed. We achieve a quantitative error analysis by a direct application of the techniques in \cite{delapena:klass:lai:2004} for the Regenerative Metropolis Algorithm of \cite{brockwell:kadane:2005}. A similar one-step procedure was developed in \cite{joulin:ollivier:2010} under a more restrictive Ricci curvature condition. Our approach provides quantitative bounds that can be reasonable if Lyapunov's function can be well-chosen. However, the confidence intervals are certainly over-estimated due to the coupling approach used in the proof inducing enormous constants.\\

The paper is organized as follows. The main result, the concentration properties for unbounded functions of Markov chains, is stated  in Theorem \ref{th:main} of Section \ref{sec:main}. Its proof is given next. It relies on a coupling approach combining the arguments of \cite{hairer:mattingly:2011} and \cite{wintenberger:2015}. Then Section \ref{sec:appl} is devoted to the construction of confidence intervals for MCMC algorithms. The case of the regenerative Metropolis algorithm of \cite{brockwell:kadane:2005} is studied in detail. Simulation study and discussion on the applications are given in Section \ref{sec:disc}.

\section{Concentration properties  for unbounded functions of Markov chains under the drift condition.}\label{sec:main}

We consider an exponentially ergodic Markov kernel $P$ on   some countably generated space $E$  that satisfies the following drift and minorization conditions  \eqref{eq:geom} and \eqref{eq:ir} respectively: there exist a Lyapunov function $V:E\mapsto [1,\infty)$, a  probability measure $\nu $, positive constants $b$, $R_0$ and $\beta<1$, a function $c:[R_0,\infty)\to(0,\infty)$ such that
\begin{align}
\label{eq:geom} PV\le \beta V+b,& \\
\label{eq:ir}P(x,\cdot) \ge c(R) \nu(\cdot),&\qquad \mbox{if}\qquad  V(x)\le R,\qquad R\ge R_0.
\end{align}
These conditions are slightly stronger than the exponential  ergodicity of the Markov chain. It is related to the Feller property; see \cite{meyn:tweedie:1993}. In particular, it requires strong aperiodicity. This, however, is not a problem in applications: the conditions \eqref{eq:geom} and \eqref{eq:ir} are satisfied in many examples, such as random coefficient autoregressive processes; see \cite{feigin:tweedie:1985}, or the trajectories of the Random Walk Metropolis algorithm, see Section \ref{sec:appl}.
Let us consider a function $f$ on $E^n$ satisfying for some $L_k>0$,
\begin{equation}\label{eq:cond}
|f(x_1,\ldots,x_n)-f(x_1,\ldots,x_{k-1},y_k,x_{k+1},\ldots,x_n)|\le L_k(V (x_k)+ V(y_k)).
\end{equation}
Dedecker and Gouezel \cite{dedecker:gouezel:2015} extended the Hoeffding inequality to the   trajectory $(X_1,\ldots,X_n)$ of the Markov chain with transition probability $P$ starting from $X_0=x$ and distributed as $\P_x$ in the case $V=2^{-1}$. They proved the existence of a constant   $K_R>0$ independent on $n$ such that
$$
\E_x[\exp(f-\E_x[f])]\le e^{K_R\sum_{k=1}^nL_k^2},\qquad x\in \{V\le R\}.
$$
We prove the following result:
\begin{thm}\label{th:main}
Assume that $P$ satisfies the drift conditions \eqref{eq:geom} and the minorization condition \eqref{eq:ir} with $R\ge R_0$ satisfying $\bar\beta(R):=\beta+2b/(1+R)<1$. Assume that $PV_k^{2}:=\E[V^{2}(X_k)\mid X_{k-1}]$ is well defined and denote  $V_k:=V(X_k)$. Then there exist coefficients $\gamma_{k,0}(1)\ge 0$, $k\ge 0$, satisfying
\begin{equation}\label{eq:K}
\sum_{k=1}^\infty\gamma_{k,0}(1)\le K:= \frac{1+\bar \beta(R) ((R -1)/c(R)-R )}{1-\bar\beta(R) },
\end{equation}
and for any $f$ satisfying \eqref{eq:cond}, $x\in E$ and $\lambda\in\R$:
\begin{equation}\label{mres}
\E_x\Big[\exp\Big(\lambda(f-\E_x[f])-\frac{\lambda^2}2\sum_{k=1}^n\Big(\sum_{j=k}^n\gamma_{k-j,0}(1)L_j\Big)^2(PV_k^{2}+V_k^{2})\Big)\Big]\le 1.
\end{equation}
Eq. \eqref{mres} holds true in the stationary case with $\E$ replacing $\E_x$ and $\E[V(X_1)^{2}]$ replacing $PV_1^{2}$.
\end{thm}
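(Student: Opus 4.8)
The plan is to establish \eqref{mres} by a martingale-plus-coupling argument in the spirit of \cite{hairer:mattingly:2011} and \cite{wintenberger:2015}. First I would introduce the Doob-type decomposition of $f-\E_x[f]$ along the natural filtration $\F_k=\sigma(X_0,\ldots,X_k)$. Writing $D_k=\E_x[f\mid\F_k]-\E_x[f\mid\F_{k-1}]$, we have $f-\E_x[f]=\sum_{k=1}^n D_k$, and the key point is to bound $|D_k|$ in terms of $V_k$ and a conditional average of $V$. The Lipschitz condition \eqref{eq:cond} controls how much a single coordinate can move $f$, but since $D_k$ involves integrating out coordinates $X_{k+1},\ldots,X_n$ against the transition kernel, the contribution of the $j$-th future coordinate gets damped by the ergodicity of the chain; this is exactly where the coefficients $\gamma_{k-j,0}(1)$ come from. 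Concretely I expect to show a bound of the form $|D_k|\le\big(\sum_{j=k}^n\gamma_{k-j,0}(1)L_j\big)\,(\Phi_k+\Psi_k)$ where $\Phi_k$ is $\F_k$-measurable and comparable to $V_k$ and $\Psi_k$ is $\F_{k-1}$-measurable and comparable to a conditional expectation related to $PV_k^2$ — the squares appearing because we will need to split into a martingale increment plus a compensator.

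Second, the construction of the $\gamma_{k,0}(1)$ and the verification of \eqref{eq:K} is really a statement about the contraction of the kernel in a weighted Wasserstein / $V$-norm. Under \eqref{eq:geom} and \eqref{eq:ir} with $\bar\beta(R)<1$, Hairer–Mattingly's coupling construction yields a semimetric $d$ (a combination of the trivial metric and $V$) that is contracted by $P$ at rate $\bar\beta(R)$, with the contraction constant being precisely $K$ in \eqref{eq:K}. I would define $\gamma_{i,0}(1)$ as the $i$-step coupling coefficients coming from iterating this contraction, so that $\sum_i\gamma_{i,0}(1)$ telescopes to the geometric series $\sum_i\bar\beta(R)^i$ times the one-step constant, giving exactly $K=(1+\bar\beta(R)((R-1)/c(R)-R))/(1-\bar\beta(R))$. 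This part is bookkeeping once the coupling is set up, but getting the constant to match \eqref{eq:K} exactly — rather than up to an unspecified factor — requires care with the precise form of the minorization-plus-drift coupling.

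Third, with the increment bound in hand, I would apply an exponential supermartingale argument. The standard one-dimensional inequality $e^{u}\le u+e^{u^2/2}\cdot(\text{something})$, or more precisely the bound $\E[e^{\lambda D_k}\mid\F_{k-1}]\le e^{\lambda^2 a_k^2(PV_k^2+V_k^2)/2}$ for the appropriate coefficient $a_k=\sum_{j=k}^n\gamma_{k-j,0}(1)L_j$, would be established first — this uses that $D_k$ has conditional mean zero and the pointwise bound on $|D_k|$, combined with $\E[V_k^2\mid\F_{k-1}]=PV_k^2$ to handle the $\F_k$-measurable part. Then the product $M_n=\exp(\lambda\sum_{k=1}^n D_k-\tfrac{\lambda^2}2\sum_k a_k^2(PV_k^2+V_k^2))$ is a supermartingale with $\E_x[M_0]\le1$, and optional stopping (or just taking $n$ fixed) gives $\E_x[M_n]\le1$, which is \eqref{mres}. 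The stationary version follows identically since then $\E[V_1^2]$ plays the role of $PV_1^2$ in the first increment.

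The main obstacle I anticipate is the first step: bounding the martingale increments $|D_k|$ by a \emph{conditionally} controlled quantity. The difficulty is that $f$ depends on the whole trajectory, so $D_k$ is a difference of conditional expectations over high-dimensional path space, and \eqref{eq:cond} only gives coordinatewise control. One must couple two copies of the chain started from $X_{k-1}$ and from an independent draw, propagate them forward, and use the $V$-weighted contraction to show the expected discrepancy at time $k+i$ is at most $\gamma_{i,0}(1)$ times something like $(V_k + \text{const})$; summing the telescoped effect over future coordinates $j\ge k$ with weights $L_j$ produces $a_k$. Keeping the dependence on the Lyapunov function quadratic (so that it is $PV_k^2+V_k^2$ that appears, not $V_k$ to the first power times an unbounded factor) is the delicate accounting, and it is what forces the enormous constants the authors mention.
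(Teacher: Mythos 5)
Your route is genuinely different from the paper's: you propose a Doob martingale decomposition plus an exponential supermartingale, whereas the paper proves a weak transport inequality $W_{1,d_{V,L}}(\P_x,Q)\le \E_Q[\tfrac{\lambda}{2}\sum_k c_k^2(PV_k^2+V_k^2)]+\lambda^{-1}\mathcal K(Q,\P_x)$ for all $Q$ (via Cauchy--Schwarz, Young's inequality, and the Marton--Samson coupling bound on each conditional marginal) and then obtains \eqref{mres} by Kantorovich duality and the choice of the Gibbs measure $dQ\propto\exp(\lambda(f-\E_x[f])-\cdots)\,d\P_x$. The transport route never needs a pointwise or conditional exponential-moment bound on the increments, which is exactly where your plan runs into trouble.

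The genuine gap is your third step. The intermediate claim $\E[e^{\lambda D_k}\mid\F_{k-1}]\le e^{\lambda^2a_k^2(PV_k^2+V_k^2)/2}$ cannot be the right statement: its right-hand side is not $\F_{k-1}$-measurable ($V_k^2$ is $\F_k$-measurable), and, more seriously, its left-hand side will in general be infinite. Your increment bound makes $|D_k|$ comparable to $a_kV(X_k)$ plus a predictable term, and the hypotheses only provide $PV^2<\infty$ --- no conditional exponential moments of $V$. Indeed, the whole content of the theorem (and the reason the counterexample recalled from \cite{adamczak:2008} and \cite{dedecker:gouezel:2015} is not contradicted) is that the quadratic compensator $V_k^2$ must stay \emph{inside} the exponential; what you actually need is the self-normalized inequality $\E[\exp(\lambda D_k-\tfrac{\lambda^2}{2}a_k^2(PV_k^2+V_k^2))\mid\F_{k-1}]\le 1$. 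Proving that for a non-symmetric martingale increment satisfying only $|D_k|\le a_k(V_k+\Psi_k)$ is not standard bookkeeping: the de la Pe\~na-type bounds with constant $\tfrac12$ require conditional symmetry, and the general versions mixing predictable and optional quadratic variation come with worse constants and a different combination than $PV_k^2+V_k^2$ (your bound $(\Phi_k+\Psi_k)^2$ also loses a factor through the cross term). So this step is a missing idea, not a routine verification. A secondary inaccuracy: the paper obtains $\sum_k\gamma_{k,0}(1)\le K$ not by iterating a $d_V$-contraction (it explicitly notes that, unlike \cite{hairer:mattingly:2011}, no contractivity in $d_V$ is assumed) but by bounding $\bar\E_{x,x'}[\sum_{k\le\tau_A}\bar V_k]$ for the coupled chain via Dynkin's formula and the geometric number of visits to the small set before regeneration; a pure geometric series in $\bar\beta(R)$ would not produce the $1/c(R)$ factor in \eqref{eq:K}.
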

\begin{rmk}
In the iid case, from Definition \ref{def:wd} below, the coefficients $\gamma_{k,0}(1)$ are null for $k>0$ and $\gamma_{0,0}(1)=1$. Thus, we obtain an extension of the McDiarmid inequality \cite{mcdiarmid:1989} that seems to be new:
\[
\E\Big[\exp\Big(\lambda(f-\E[f])-\frac{\lambda^2}2\sum_{k=1}^nL_k^2(\E[V(X_k)^{2}]+V(X_k)^{2})\Big)\Big]\le 1,\qquad \lambda\in \R.
\]
Notice that under the bounded differences condition $V=2^{-1}$  we recover the optimal constant $8^{-1}$.
\end{rmk}
\begin{rmk}
The inequality \eqref{mres} implies exponential inequalities for the normalized process  when $f=\sum_{k=1}^nV_k$. Applying Theorem 2.1 of \cite{delapena:klass:lai:2004}, we obtain the subgaussian inequality $\E[\exp(xY)]\le \sqrt 2 \exp(C x^2)$, $x>0$, of the process 
$$
Y:=\frac{f-\E_x[f]}{\sqrt{\sum_{k=1}^n(PV_k^{2}+V_k^{2}+2\E_x[V_k^2])}}
$$
for some constant  $C>0$. Such bounds cannot be obtained using the approach of \cite{dedecker:gouezel:2015} because the bounded differences properties \cite{mcdiarmid:1989} of such self-normalized processes are growing as $\sqrt n$.
\end{rmk}
\begin{rmk}For a bounded function $f$ one can compare \eqref{mres} with the result of Dedecker and Gouezel \cite{dedecker:gouezel:2015}. The limitation of the result in Theorem \ref{th:main} is that considering $V=2^{-1}$ constrains the Markov chain to be uniformly ergodic. In such a restrictive case, the classical Bernstein inequality was extended by Samson in \cite{samson:2000} and the Hoeffding inequality by Rio in \cite{rio:2000}.
\end{rmk}
\begin{proof}[Proof of Theorem \ref{th:main}] The proof is based on a new coupling argument applied to the coupling scheme $(X_k,X_k')_{1\le k\le n}$ of \cite{rosenthal:2003}, where $(X_k')_{1\le k\le n}$ is a copy of $(X_k)_{1\le k\le n}$.  For completeness, let us first recall the construction of the coupling scheme.  Any Markov chain $\bar P$ on $E^2$ with common marginal $P$ also satisfies
$$
\bar P \bar V(x,x')\le \beta \bar V(x,x') +2b,
$$
for the drift function $\bar V(x,x')=V(x)+V(x')$.  Moreover, there exists a coupling kernel $\bar P $, see \cite{rosenthal:2003} for details, with common marginal $P$ such that 
$$
\bar P ((x,x'),\cdot\times \cdot)\ge c(R) \nu(\cdot),\qquad (x,x')\in \{  V\le  R\}^2.
$$
In particular, $\bar P ((x,x'),\cdot)$, $(x,x')\in \{ V\le R\}^2$, has a mass at least equal to $c(R)$ on the diagonal. As $\bar V\ge 1+R$ when $(x,x')\notin \{ V\le R\}^2$, we also have
$$
\bar P \bar V(x,x')\le \left(\beta +\frac{2b}{1+R}\right)\bar V(x,x'),\qquad (x,x')\notin \{ V\le R\}^2.
$$
We have $\bar \beta =\beta + 2b/(1+R)<1$ by assumption.
Then one can apply the Nummelin splitting scheme on the Markov chain $(X_t,X_t')$ driven by $\bar P $. There exists an enlargement $(X_t,X_t',B_t)$ with $B_t\in \{0,1\}$ such that it admits an atom $A=\{ V\le R\}^2\times \{1\}$ and $\P(B_t=1\mid (X_t,X_t')\in \{ V\le R\}^2)=c(R) $. Let $\tau$ and $\tau_A$ denote the first hitting time to $\{ V\le R\}^2$ and the atom $A$, respectively. Due to the regenerative properties of the enlarged chain, one can always restart $(X_t,X_t',B_t)$ such that $X_t=X_t'$ for $t\ge \tau_A(\ge \tau)$. From the Dynkin formula (Theorem 11.3.1 of \cite{meyn:tweedie:1993}), denoting $\bar V_k=\bar V(X_k,X_k')$ and $\bar P \bar V_k=\E[\bar V(X_k,X_k')\mid (X_{k-1},X_{k-1}')]$ we have
$$
\bar \E_{x,x'}[\bar V_{\tau}]=\bar V(x,x')+\bar \E_{x,x'}\Big[\sum_{k=1}^{\tau} \bar P \bar V_k-\bar V_{k-1} \Big].
$$
Plugging the drift condition in this formula, we obtain
\[
\bar \E_{x,x'}[\bar V_{\tau }]\le \bar V(x,x')+( \bar \beta(R)-1)\bar \E_{x,x'}\Big[\sum_{k=1}^{\tau } \bar V_{k-1} \Big],
\]
that yields
\begin{equation}\label{eq:dynkin}
\E_{x,x'}\Big[\sum_{k=0}^{\tau} \bar V_{k} \Big]\le \frac{\bar V(x,x') -\bar \beta(R)\bar \E_{x,x'}[\bar V_{\tau }]}{1- \bar \beta(R)}\le \frac{\bar V(x,x')-2\bar \beta(R)}{1- \bar\beta(R)}.
\end{equation}
Denoting $\tau(j)$ the successive hitting times to $\{ V\le R\}^2$,  we have
\begin{align*}
\bar \E_{x,x'}\Big[\sum_{k=0}^{\tau_A} \bar V_{k}\Big]&=\E_{x,x'}\Big[\sum_{k=0}^{\tau} \bar V_{k} \Big]+ \bar \E_{x,x'}\left[\sum_{j=1}^{\infty} \sum_{k=\tau(j)+1}^{\tau(j+1)}\bar V_k1_{B_1=\cdots B_j=0} \right]\\
&\le \frac{\bar V(x,x')-2\bar \beta(R)}{1- \bar\beta(R)}+  \bar \E_{x,x'}\left[\sum_{j=1}^{\infty} (1-c(R))^j\bar E_{(X_{\tau(j)},X_{\tau(j)}')}\left[\sum_{k=\tau(j)+1}^{\tau(j+1)}\bar V_k  \right]\right].
\end{align*}
using the strong Markov property to assert the last inequality. Using \eqref{eq:dynkin} and $\sup_{\{V\le R\}^2}\bar V\le 2R$ we obtain,
\[
\bar \E_{(X_{\tau(j)},X_{\tau(j)'})}\left[\sum_{k=\tau(j)+1}^{\tau(j+1)}\bar V_k  \right]\le \sup_{\{V\le R\}^2}\E_{x,x'}\Big[\sum_{k=1}^{\tau} \bar V_{k} \Big]\le\frac{2\bar\beta(R) }{1-\bar\beta(R)}(R-1).
\]
Collecting those bounds, we derive
\begin{align*}
\bar \E_{x,x'}\Big[\sum_{k=0}^{\tau_A} \bar V_{k}\Big]&\le \frac{\bar V(x,x')-2\bar \beta(R)}{1-\bar\beta(R)}+\frac{2\bar\beta(R) (R-1)}{c(R)(1-\bar\beta(R))}-\frac{2\bar\beta(R)(R-1)}{1-\bar\beta(R)}\\
&\le \frac{\bar V(x,x')-2\bar \beta(R) R}{1-\bar\beta(R)}+\frac{2\bar\beta(R) (R-1)}{c(R)(1-\bar\beta(R))}.
\end{align*}
We are now ready to use our new coupling argument, combining the metric  $d_V(x,y)=\bar V(x,y)=(V(x)+V(y))\1_{x\neq y}$  of \cite{hairer:mattingly:2011}  with the $\Gamma$-weak dependence notion of \cite{wintenberger:2015}. A main difference with \cite{hairer:mattingly:2011} is that the coupling argument of \cite{wintenberger:2015} does not require any contractivity of the Markov kernel with respect to the metric $d_V$. We obtain
\begin{equation}\label{eq:wd}
\bar\E_{x,x'}\Big[\sum_{k=0}^{\infty}d_V(X_k,X_k')\Big]=\bar\E_{x,x'}\Big[\sum_{k=0}^{\tau_A}d_V(X_k,X_k')\Big]\le K d_V(x,x')
\end{equation}
with $K$ defined in \eqref{eq:K} as $X_k=X_k'$ for $k>\tau_A$. Recall the following definition from \cite{wintenberger:2015}:
\begin{dfn}\label{def:wd}
A Markov chain is $\Gamma_{d_V,d_V}(1)$-weakly dependent  if  there exist  coefficients $\gamma_{k,0}(1)\ge 0$ such that for any $(x_0,x'_0)\in E^{2}$ there exists a coupling scheme $(X_k,X_k')_{1\le k\le n}$ conditionally on $(X_0,X_0')=(x_0,x'_0)$ satisfying
$$
\E_{x_0,x'_0}[ d_V(X_k,X_k')]\le  \gamma_{k,0}(1)\, d_V(x_0,x'_0),\qquad 0\le k\le n.
$$
\end{dfn}
In view of \eqref{eq:wd}, we claim that the Markov chain $(X_k)_{1\le k\le n}$ is $\Gamma_{d_V,d_V}(1)$-weakly dependent with dependence coefficients satisfying
$$
\sum_{k=0}^\infty \gamma_{k,0}(1)\le K.
$$
By $X$ we denote the trajectory $(X_1,\ldots,X_n)$ on $E^n$ starting from $x$ with distribution $\P_x$ and by $d_{V,L}$ the metric on $E^n$ such that 
$$d_{V,L}(x,y)=\sum_{k=1}^n L_kd_V(x_k,y_k),\qquad x,y\in E^n.$$
Recall the definition of the Wasserstein distance between $\P_x$ and any measure $Q$ on $E^n$
$$
W_{1,d_{V,L}}(\P_x,Q)=\inf_\pi \E_\pi[d_{V,L}(X,Y)],
$$
where $\pi$ is any coupling measure such that $(X,Y)\sim \pi$, $X\sim \P_x$ and $Y\sim Q$. We require more notation from \cite{wintenberger:2015}; For any deterministic $\alpha=(\alpha_1,\ldots,\alpha_n)\in (\R^+)^n$ we denote
\[
\tilde W_{\alpha,d_{V}}(\P_x,Q)=\inf_\pi \E_\pi\Big[\sum_{k=1}^n \alpha_kd_{V}(X_k,Y_k)\Big].
\]
For any $Y\in E^n$ distributed as $Q$, $Q_{Y^{(j-1)}}$ denotes the conditional distribution of $Y_j$ given $Y^{(j-1)}=(Y_1,\ldots,Y_j)$ for $1\le j\le n$ (artificially considering that $y_0=x$, the initial state of $\P_x$).
Equality l.7 p.15 of \cite{wintenberger:2015} in the specific case $p=1$ and $d=d'=d_V$, states that for any $\alpha$, we have
\begin{eqnarray*}
\tilde W_{\alpha,d_{V}}(\P_x,Q)
&\le &  \sum_{j=1}^n\sum_{k=j}^n\alpha_k\gamma_{k-j,0}(1)
\E_Q[W_{1,d_V}(P_{Y^{(j-1)}},Q_{Y^{(j-1)}})]\\
&\le &  \sum_{j=1}^n\sum_{k=j}^n\alpha_k\gamma_{k-j,0}(1)\E_Q[W_{1,d_V}(P_{Y_{j-1}},Q_{Y^{(j-1)}})],
\end{eqnarray*}
using that $P_{Y^{(j-1)}}=P_{Y_{j-1}}$ by the strong Markov property. Considering $\alpha=L=(L_1,\ldots,L_n)$,   the identity $\tilde W_{L,d_{V}}(P_x,Q)=W_{1,d_{V,L}}(\P_x,Q)$ holds and we have
\[
W_{1,d_{V,L}}(\P_x,Q)\le \sum_{j=1}^n\sum_{k=j}^nL_k\gamma_{k-j,0}(1)\E_Q[W_{1,d_V}(P_{Y_{j-1}},Q_{Y^{(j-1)}})].
\]
For any $1\le j\le n$, let $\pi_j$ denote a conditional coupling scheme $(X'_j,Y_j')$ with marginals $P_{Y_{j-1}}$ and $Q_{Y^{(j-1)}}$ and denote $c_j=\sum_{k=j}^nL_k\gamma_{k-j,0}(1)$.
We estimate the $n$ terms in the first sum of the RHS applying successively the Cauchy-Schwarz and Young inequalities with $\lambda>0$,
\begin{align*}
c_jW_{1,d_V}(P_{Y_{j-1}},Q_{Y^{(j-1)}})=&c_j\inf_{\pi_j}\E_{\pi_j}[(V(X'_j)+V(Y'_j))\1_{X_j'\neq Y_j'}]\\
\le&\inf_{\pi_j}\Big\{ \sqrt{c_j^2\E_{\pi_j}[V^2(X'_j)]\E_{\pi_j}[\pi_j(X'_j\neq Y'_j\mid X'_j)^2]}\\
&\qquad+\sqrt{c_j^2\E_{\pi_j}[V^2(Y'_j)]\E_{\pi_j}[\pi_j(X'_j\neq Y'_j\mid Y'_j)^2]}\Big\}\\
\le& \frac{\lambda c_j^2}2(\E_{\pi_j}[V^2(X'_j)]+\E_{\pi_j}[V^2(Y_j')])\\
&\qquad+\frac{\inf_{\pi_j}\{\E_{\pi_j}[\pi_j(X'_j\neq Y'_j\mid X'_j)^2]+\E_{\pi_j}[\pi_j(X'_j\neq Y'_j\mid Y'_j)^2]\}}{2\lambda}.
\end{align*}
As $X'_j\sim P_{Y_{j-1}}$ we identify $\E_{\pi_j}[V^2(X'_j)]=PV_j^2$.
We then use the following improvement of the Marton inequality \cite{marton:1996a} (see Lemma 8.3 of \cite{boucheron:lugosi:massart:2013} combined with Lemma 2 of \cite{samson:2000})
$$
\inf_{\pi_j}\{\E_{\pi_j}[\pi_j(X'_j\neq Y'_j\mid X'_j)^2]+\E_{\pi_j}[\pi_j(X'_j\neq Y'_j\mid Y'_j)^2]\}\le 2\mathcal K(Q_{Y^{(j-1)}},P_{Y_{j-1}}),
$$
where $\mathcal K(Q,P)$ is the Kullback-Leibler divergence between $P$ and $Q$:
$$
\mathcal K(Q,P)=\E_Q[\log(dQ/dP)].
$$
We obtain, for any $1\le j\le n$:
$$
c_jW_{1,d_V}(P_{Y_{j-1}},Q_{Y^{(j-1)}})\le \frac{\lambda c_j^2}2(PV^2_j+\E_{\pi_j}[V^2(Y'_j)])+\lambda^{-1}\mathcal K(Q_{Y^{(j-1)}},P_{Y_{j-1}}).
$$
Combining those inequalities, as $Y'_j\sim Q_{Y^{(j-1)}}$ so that $\E_Q[\E_{\pi_j}[V^2(Y'_j)]]=\E_Q[V^2_j]$, we obtain
\begin{align*}
W_{1,d_{V,L}}(P,Q)&\le \sum_{j=1}^nc_j\E_Q[W_{1,d_V}(P_{Y_{j-1}},Q_{Y^{(j-1)}})]\\ 
&\le \E_Q\left[\sum_{j=1}^n\left(\frac{\lambda c_j^2}2(PV_j^2+V_j^2)+\lambda^{-1}\mathcal K(Q_{Y^{(j-1)}},P_{Y_{j-1}})\right)\right].
\end{align*}
From the identity
$$
\E_Q\Big[\sum_{j=1}^n\mathcal K(Q_{Y^{(j-1)}},P_{Y_{j-1}})\Big]=\mathcal K(Q,\P_x)
$$
we obtain
$$
W_{1,d_{V,L}}(\P_x,Q)\le \E_Q\left[\sum_{k=1}^n\frac{\lambda c_k^2}2(PV_k^2+V_k^2)\right]+\lambda^{-1} \mathcal K(Q,\P_x).
$$
Then we apply the  Kantorovich duality (see for instance \cite{villani:2009}):
$$
W_{1,d_{V,L}}(P_x,Q)=\sup_g \E_Q[g]-\E_x[g]
$$
where $g$ is 1-Lipschitz with respect to the $d_{V,L}$ metric:
$$
|g(x)-g(y)|\le \sum_{k=1}^nL_j(V(x_k)+V(y_k))\1_{x_k\neq y_k}.
$$
Thus, as any $f$ satisfying \eqref{eq:cond} also satisfies such Lipschitz condition, we obtain
$$
\E_Q\Big[\lambda(f-\E_x[f])-K\sum_{k=1}^n\frac{(\lambda c_k)^2}2(PV_k^2+V_k^2)\Big]\le \mathcal K(Q,\P_x).
$$
Choosing the probability measure $Q$ as
$$
dQ\propto \exp\Big(\lambda(f-\E_x[f])-K\sum_{k=1}^n\frac{(\lambda L_k)^2}2(PV_k^2+V_k^2)\Big)d\P_x
$$
we obtain the desired inequality for the trajectory $X$ starting from $x\in E$.\\

To prove the inequality in the stationary case, it is tempting to integrate the inequality \eqref{mres}. However we do not succeed in replacing $\E_x$ by $\E$ in the exponential term. Step 1 of the proof in \cite{dedecker:gouezel:2015} does not apply not in the unbounded case. Instead, one has to use the same reasoning as above, replacing everywhere $\P_x$ by the stationary distribution  $\P$ of the trajectory $(X_1,\ldots,X_n)$. 
Notice that it is then convenient to add an artificial initial point $X_0 = Y_0 = x$
for a fixed point $x\in E$ to the trajectories  $(X_1,\ldots, X_n)$ and $(Y_1,\ldots,Y_n)$; see \cite{djellout:guillin:wu:2004} and \cite{wintenberger:2015} for more details. Moreover, we check that  the same  $\Gamma_{d_V,d_V}$ weak dependence properties still hold for $\P$ as it is a notion conditional to any possible initial value. So we can apply the same reasoning to prove the result in the stationary case.
\end{proof}

\section{Application to non-asymptotic confidence intervals for MCMC algorithms}\label{sec:appl}
In this section  we are considering an approximation of $\int g(x)d\P(x)=\E[g]$ for some unbounded function $g$ and some distribution $\P$, known up to the normalizing constant. The Markov Chain Monte Carlo (MCMC) algorithms generates the approximation $\frac1n\sum_{k=1}^n g(X_k)$ where $(X_k)_{1\le k\le n}$ is a Markov chain admitting $\P$ as its unique stationary distribution. We refer to \cite{roberts:rosenthal:2004} for a survey on MCMC algorithms.
Usually, one has to consider a burn-in period to deal with the bias $|\E[g]-\E_x[g]|$ due to the arbitrary choice of the initial state $x$ of the Markov chain. However, recent algorithms based on regeneration schemes generate  simulations that are automatically stationary;  see \cite{mykland:tierney:yu:1995} and \cite{brockwell:kadane:2005} for instance. We will only focus on such algorithms to avoid the issue of the burn-in period and corresponding quantitative bounds on the bias  $|\E_x[g]-\E[g]|$.

\subsection{Estimation errors for MCMC algorithms}
An interesting case is when $|g|$ is equal to a drift function $|g|=V$. In the stationary case, with $K$ the constant provided in \eqref{eq:K}, we obtain
$$
\E \Big[\exp\Big(\lambda \sum_{k=1}^n(g(X_k)-\E [g])- \lambda^2 \frac{K^2}2\sum_{k=1}^n(Pg_k^{2 }+g_k^{2})\Big)\Big]\le 1.
$$
Notice that the square integrability of $g$ is satisfied if $g^2 $ is also proportional to a Lyapunov function. Then
the mean ergodic theorem applies and we obtain the a.s. convergence
\begin{equation}\label{slln}
\frac{K^2}{2n}\sum_{k=1}^n(Pg^2_k+g^2_k)\to_{n\to\infty}K^2\E[g^2].
\end{equation}
Moreover, the CLT applies and $(\sum_{k=1}^ng(X_k)-\E_x[g])/\sqrt{n}\to^d  \sigma^2(g) N$ where $N\sim\mathcal N(0,1)$ and 
the asymptotic variance $\sigma^2(g)$ can be expressed as
$$
\sigma^2(g)=\v[g^2] +2\sum_{k=1}^\infty\cov[g(X_0),g(X_k)].
$$
Thus, if one could consider the exponential inequality asymptotically, one would obtain
$$
\E[\exp(\lambda \sigma (g) N)]\le \exp\left(\lambda^2 K^2\E [g^2]\right),\qquad \lambda >0. 
$$
The quantity $K^2\E [g^2]$ appears as a natural over-estimator of $\sigma^2(g)/2$.
Similar upper bounds have been derived under the spectral gap condition in \cite{rosenthal:2003} and under the Ricci curvature condition in \cite{joulin:ollivier:2010}. The spectral gap assumption relies on the control of the correlations for any square integrable functions of the Markov chain. The Ricci curvature condition relies on the contraction properties of any Lipschitz functions of the Markov chain. The advantage of the drift condition approach used here is that the constant $K$ is related only with the Lyapunov function $V$. So the estimate could be much sharper if the Lyapunov function can be well chosen, i.e. close to $g$. However, the bad irreducible properties of the coupling scheme make the constant enormous in most applications (see Section \ref{sec:disc} for numerical values). Better upper bounds for the asymptotic variance than $2K^2\E[g^2]$ have already been obtained in \cite{latuszynski:miasojedow:niemiro:2013} by a direct application of the Nummelin scheme on $(X_1,\ldots,X_n)$ (and not on the coupling scheme)  when $g^2=V$. It is an open question if such sharper over-estimators of the asymptotic variance satisfy an empirical Bernstein inequality similar to \eqref{mres}.
It seems that our large over-estimation is partly due to the fact that the rate of convergence in \eqref{slln} can be very slow (eventually $\E[|g|^{2+\delta}]=\infty$ for all $\delta>0$) but also because  the coupling technique used in the proof is very conservative; see the discussion in Section \ref{sec:disc}.

\subsection{Confidence intervals for the regenerative Metropolis algorithm}\label{sec:ci}
We consider the Random Walk Metropolis algorithm to simulate a Markov chain $(X_k)_{1\le k\le n}$ on $E=\R^d$, $d\ge 1$,  with stationary distribution $\P$  and given a function $h$ proportional to the density of $\P$ with respect to the Lebesgue measure. For some continuous symmetric positive density $q$ one simulates $Z_k$ iid  and $U_{k}$ iid uniform on $[0,1]$ and independent of the $Z_k$. Then one computes recursively the Markov chain $X_k$ from the relation
$$
X_{k}=X_{k-1}+Z_{k}\1_{U_k\le \min(1,h(X_{k-1}+Z_k)/h(X_k))},\qquad k\ge 1,\qquad X_0=x.
$$
Mengersen and Tweedie \cite{mengersen:tweedie:1996} provide a sufficient condition (that is almost necessary) on $h$ for the geometric ergodicity of the Random Walk Metropolis algorithm: the $\alpha$ log-concavity in the tails assumption ($\alpha>0$) asserting
the existence of $x_1>0$ such that
\begin{align}\label{logc}
\frac{h(y)}{h(x)}\le \exp(-\alpha (|y|-|x|)),\qquad |y|>|x|>x_1,
\end{align}
where $|\cdot|$ is some norm on $E$.
Let us recall the  result of Theorem 3.2 of  \cite{mengersen:tweedie:1996}:
\begin{thm}\label{th:mt}
If $d=1$, $h$ satisfies \eqref{logc} and $q(x)\le be^{-\alpha |x|}$ for some $\alpha>0$ 
then the Random Walk Metropolis algorithm is geometrically ergodic with the drift function $V(x)=e^{s|x|}$, $s<\alpha$.
\end{thm}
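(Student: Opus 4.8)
The plan is to follow the classical Meyn--Tweedie route to geometric ergodicity (Chapters 15--16 of \cite{meyn:tweedie:1993}): I would establish a geometric drift $PV\le\beta V+b\1_C$ for $V(x)=e^{s|x|}$, with $\beta<1$ and $C$ a compact interval — which in particular yields a bound of the form \eqref{eq:geom} with the same $\beta$ — then check that $C$ is a small set and that the chain is $\psi$-irreducible and aperiodic, and conclude by the drift/small-set criterion. The latter structural points are routine for Random Walk Metropolis: writing the kernel as $P(x,dy)=a(x,y)q(y-x)\,dy+r(x)\delta_x(dy)$ with $a(x,y)=\min(1,h(y)/h(x))$, continuity, symmetry and strict positivity of $q$ (and positivity/continuity of $h$) make $a(x,y)q(y-x)$ bounded below by a positive constant on $C\times C$, so $C$ is small and the chain is Lebesgue-irreducible, while the positive rejection probability gives aperiodicity. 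Thus the real content is the drift estimate.

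For the drift I would compute, as $x\to+\infty$,
\[
\frac{PV(x)}{V(x)}=1+\int_{\R}a(x,x+z)\bigl(e^{s(|x+z|-|x|)}-1\bigr)q(z)\,dz,
\]
and split the integral into $z>0$, $-x<z<0$, and $z<-x$. On $z>0$ one has $x+z>x>x_1$, so \eqref{logc} gives $a(x,x+z)\le e^{-\alpha z}$ and $|x+z|-|x|=z$; since $e^{sz}-1>0$ this range contributes, for every large $x$, at most $\int_0^\infty e^{-\alpha z}(e^{sz}-1)q(z)\,dz$, finite because $s<\alpha$. On $-x<z<0$ one has $|x+z|-|x|=z<0$, and for each fixed such $z$, \eqref{logc} applied with larger argument $x$ and smaller argument $x+z$ forces $a(x,x+z)=1$ once $x$ is large, so dominated convergence makes this range tend to $\int_{-\infty}^0(e^{sz}-1)q(z)\,dz$ (taking care of the bounded ``bulk'' piece of $h$ near the origin, which has vanishing $q$-mass). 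On $z<-x$ I would bound $a\le1$ and $|x+z|\le|z|$ and use $q(z)\le be^{-\alpha|z|}$, so the contribution is at most $\int_{|z|>x}be^{-(\alpha-s)|z|}\,dz\to0$. Hence $\limsup_{x\to+\infty}(PV(x)-V(x))/V(x)\le I:=\int_0^\infty e^{-\alpha z}(e^{sz}-1)q(z)\,dz+\int_{-\infty}^0(e^{sz}-1)q(z)\,dz$. Substituting $z\mapsto-z$ in the second integral (using $q(-z)=q(z)$), the integrand becomes $e^{-\alpha z}(e^{sz}-1)+e^{-sz}-1=(1-a)(b-a)/a$ with $a=e^{-sz}\in(0,1)$ and $b=e^{-\alpha z}<a$ for $z>0$, hence strictly negative; so $I<0$. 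By symmetry of $|\cdot|$ and of $q$ the same bound holds as $x\to-\infty$, whence $\limsup_{|x|\to\infty}PV(x)/V(x)\le1+I<1$. Choosing $\beta\in(1+I,1)$ gives a compact $C=\{|x|\le R\}$ with $PV\le\beta V$ off $C$, while on $C$ one has $PV(x)\le e^{sR}\bigl(1+\int e^{s|z|}q(z)\,dz\bigr)<\infty$ (finite since $s<\alpha$), giving $PV\le\beta V+b\1_C$.

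With the geometric drift, the small set $C$, $\psi$-irreducibility and aperiodicity in hand, geometric ergodicity follows from the drift criterion, e.g.\ Theorem 15.0.1 of \cite{meyn:tweedie:1993}. I expect the main obstacle to be the drift computation, and within it the ``jump across the origin'' range $z<-x$: this is exactly where the matching exponential tail $q(z)\le be^{-\alpha|z|}$ is used, since a heavier tail on $q$ would let the chain move from the far right to the far left often enough to destroy the strict negativity of $I$. Keeping precise track of when \eqref{logc} may be invoked (only for pairs of points both beyond $x_1$) is the remaining point requiring care.
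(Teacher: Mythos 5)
The paper offers no proof of this statement: it is quoted verbatim as Theorem 3.2 of \cite{mengersen:tweedie:1996}. Your argument is a correct reconstruction of the standard drift-criterion proof from that reference — the decomposition of $\bigl(PV(x)-V(x)\bigr)/V(x)$ over $z>0$, $-x<z<0$ and $z<-x$, the limit $1+I$ with $I<0$ via the identity $e^{-\alpha z}(e^{sz}-1)+e^{-sz}-1=(1-a)(b-a)/a<0$, and the smallness of compact intervals are exactly the ingredients of Mengersen and Tweedie's proof (with the implicit standing assumptions $0<s$ and that $h$ is continuous and positive, which are also needed there).
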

To overcome the bias issue we simulate under the stationary measure using the Regenerative Metropolis algorithm of Brockwell and Kadane \cite{brockwell:kadane:2005} in a simple version (the algorithm 1 in \cite{brockwell:kadane:2005} with $q$ as the re-entry proposal distribution). The algorithm adds an artificial atom to the Random Walk Metropolis Markov chain that has to be removed to obtain the Markov chain $(X_k)_{1\le k\le n}$. The visits to the atom correspond to the state $A=1$. The chain $X_k$ is only updated outside the atom when $A=0$. So the algorithm can be viewed as a clever series of reject sampling steps and Random Walk  Metropolis  steps with   the same stationary distribution $\P$. The drawback of the approach is that it requires more than $n$ steps to obtain $(X_k)_{1\le k\le n}$ because of the rejection steps. To overcome this efficiency issue, one can use a parallelized version of the algorithm; see \cite{brockwell:2006}. The pseudo code of the algorithm is given in Figure \ref{fig:3}.
\begin{figure}[h!]
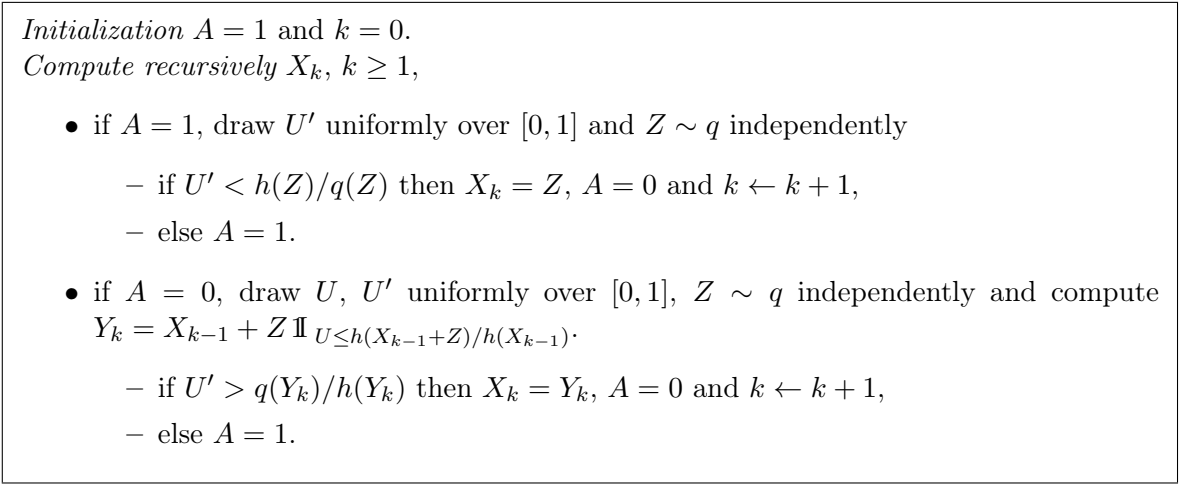

 \fbox{
 \begin{minipage}[c]{15cm}
 \vspace{1mm}
{\it Initialization} $A=1$ and $k=0$.\\
{\it Compute recursively} $X_{k}$, $k\ge 1$,
\begin{itemize}
\item if $A=1$, draw $U'$ uniformly over $[0,1]$ and $Z\sim q$ independently 
    \begin{itemize}
    \item if $U'<h(Z)/q(Z)$ then $X_{k}=Z$, $A=0$ and $k\leftarrow k+1$,
    \item else $A=1$.
\end{itemize}
\item  if $A=0$, draw $U$, $U'$ uniformly over $[0,1]$, $Z\sim q$ independently and compute  $Y_k=X_{k-1}+Z\1_{~U\le h(X_{k-1}+Z)/h(X_{k-1})}$.
   \begin{itemize} 
   \item if $U'>q(Y_k)/h(Y_k)$  then $X_{k}=Y_k$, $A=0$ and $k\leftarrow k+1$,
   \item  else $A=1$.
\end{itemize}
\end{itemize}
\vspace{1mm}
\end{minipage} 
 }
\caption{the regenerative Random Walk Metropolis algorithm}
\label{fig:3}
\end{figure}
Notice that the advantage compared with the reject sampling algorithm is that the rejection steps are more robust to the choice  of the constant  $k>0$ in the threshold $h/(kq)$. Here we fix $k=1$ for simplicity. The algorithm automatically simulates the Markov chain under the stationary measure. It also appears that the rejection step makes the irreducible property of the chain nicer than the one of the chain generated by the Random Walk Metropolis algorithm. Theorem 2 of \cite{brockwell:kadane:2005} applied in our context shows that  the Markov chain satisfies condition \eqref{eq:ir} on $\{V\le R\}$ for any $R>0$ with $\nu$ the marginal distribution after exiting $A=1$ and $c(R)$ the minimum of the probability to obtain $A=1$ from $x\in \{V\le R\}$ and $A=0$:
\begin{align}\nonumber
c(R)&=\min_{\{V\le R\}}\E[q(Y_1)/h(Y_1)\wedge 1\mid X_0=x]\\
\label{eq:c}
&=\min_{\{V\le R\}}\Big\{\E_q\Big[\frac{h(x+Z)}{h(x)}\wedge 1\Big]\E_q\Big[\frac{q(x+Z)}{h(x+Z)}\wedge 1\Big]+\Big(1-\E_q\Big[\frac{h(x+Z)}{h(x)}\wedge 1\Big]\Big)\frac{q(x)}{h(x)}\wedge 1\Big\},
 \end{align}
(larger than the minorization constant given in Lemma 1.2 of \cite{mengersen:tweedie:1996} for the Metropolis algorithm).\\

Define as above the Lyapunov function $V(x)=e^{s|x|}$, $x\in \R$, and 
 denote $\|g\|_V=\sup_{x\in E}|g(x)|/V(x)$. We have the following result, also true for $d\ge 1$,
\begin{thm}
Assume that $h$ satisfies \eqref{logc} and $q(x)\le Ce^{-\alpha |x|}$ for some $C>0$ and $\alpha>2s$. Assume that $R\ge V(x_1)$ is sufficiently large such that 
\[
\bar\beta(R):=\frac{\E_q[\exp(s-\alpha)|Z|)]+1}2+\frac{2V(x_1)\E_q[V(Z)]}{1+R}<1.
\] 
Then for any function $g$ such that $\|g\|_V<\infty$, we have, for any $y>0$ and $n\ge 1$,
\begin{equation}
\left|\frac1n \sum_{k=1}^n g(X_k)-\E [g] \right|\le \frac{x\|g\|_V}{\sqrt n}\sqrt{(\hat \sigma^2_n(V)+y)\left(1+\frac12\log(\hat \sigma^2_n(V)/y+1)\right)},
\end{equation}
with probability $1-\exp(-x^2/2)$, $x>\sqrt 2$ and with the  over-estimator of the asymptotic variance $\sigma^2(V)$:
$$
\hat \sigma^2_n(V):= \left(\frac{1+\bar\beta(R)((R-1)/c(R)-R)}{1-\bar\beta(R)}\right)^2\left(\frac{1+\E_q[V^2(Z)]}n\sum_{k=1}^nV^2(X_k)+\varepsilon_n\right)
$$
and $\varepsilon_n=(\E[V^2(X)]-V_{n}^2\E_q[V^2(Z)])/n$ is considered as a non-observable negligible term.
\end{thm}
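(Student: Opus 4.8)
The plan is to specialize the stationary form of Theorem \ref{th:main} to $f=\sum_{k=1}^n g(X_k)$ and then feed the resulting exponential inequality into the self‑normalized inequality of \cite{delapena:klass:lai:2004}. First I would check that the chain $(X_k)$ produced by the algorithm of Figure \ref{fig:3} satisfies \eqref{eq:geom} and \eqref{eq:ir} with $V(x)=e^{s|x|}$. The minorization \eqref{eq:ir} holds on every sublevel set $\{V\le R\}$ (hence any $R_0$ is admissible) with the constant $c(R)$ of \eqref{eq:c}, by Theorem~2 of \cite{brockwell:kadane:2005}. For the drift I would split one transition into the Random‑Walk‑Metropolis proposal step and the acceptance/regeneration step; since a regeneration re‑enters through $\nu\propto\min(q,h)$ and $q(x)\le Ce^{-\alpha|x|}$, and since \eqref{logc} controls the acceptance ratios in the tails, the computation behind Theorem \ref{th:mt} of \cite{mengersen:tweedie:1996} (now keeping track of constants) gives $PV\le\beta V+b$ with $\beta=\tfrac12(\E_q[e^{(s-\alpha)|Z|}]+1)$ and $b=V(x_1)\E_q[V(Z)]$. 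Then $\bar\beta(R)=\beta+2b/(1+R)$ is exactly the quantity in the statement, $\bar\beta(R)<1$ with $R\ge V(x_1)$ puts us in the scope of Theorem \ref{th:main}, and the $K$ of \eqref{eq:K} is the prefactor that appears squared in $\hat\sigma^2_n(V)$. The hypothesis $\alpha>2s$ is what makes $\E_q[V^2(Z)]=\E_q[e^{2s|Z|}]<\infty$ (so $PV^2$ is well defined) and makes $e^{2s|\cdot|}$ itself a drift function, whence $\E[V^2(X)]<\infty$ under $\P$; so the stationary version of Theorem \ref{th:main} applies.

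Next, take $f(x_1,\dots,x_n)=\sum_{k=1}^n g(x_k)$. Since $|g(x_k)-g(y_k)|\le\|g\|_V(V(x_k)+V(y_k))$, condition \eqref{eq:cond} holds with $L_k=\|g\|_V$, and because $\sum_{m\ge0}\gamma_{m,0}(1)\le K$ the coefficients $c_k=\sum_{j=k}^n\gamma_{k-j,0}(1)L_j$ are all bounded by $K\|g\|_V$. Enlarging the subtracted term in \eqref{mres} (which only decreases the exponent, hence only decreases the left side) gives, in the stationary case,
\[
\E\Big[\exp\Big(\lambda\,(f-n\E[g])-\tfrac{\lambda^2}{2}K^2\|g\|_V^2\,W_n\Big)\Big]\le1,\qquad\lambda\in\R,
\]
with $W_n=\E[V^2(X)]+\sum_{k=1}^n V^2(X_k)+\sum_{k=2}^n\E[V^2(X_k)\mid X_{k-1}]$, the $k=1$ conditional term being replaced by $\E[V^2(X_1)]=\E[V^2(X)]$ as in the statement of Theorem \ref{th:main}.

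The heart of the argument is then to bound $W_n$ almost surely by the observable quantity $n\hat\sigma^2_n(V)/K^2$. For this I would establish the one‑step estimate $\E[V^2(X_k)\mid X_{k-1}=x]\le\E_q[V^2(Z)]\,V^2(x)$: the proposal part is handled by the triangle inequality $V^2(x+z)\le V^2(x)V^2(z)$ together with $V^2(z)\ge1$, so that the convex combination $\alpha V^2(x+z)+(1-\alpha)V^2(x)\le V^2(x)V^2(z)$ and its $q$‑average is $\E_q[V^2(Z)]V^2(x)$, while the regeneration re‑entries into $\nu\propto\min(q,h)$ have to be shown not to exceed this budget. Summing for $k=2,\dots,n$, reindexing $\sum_{k=2}^n\E[V^2(X_k)\mid X_{k-1}]\le\E_q[V^2(Z)]\sum_{k=1}^{n-1}V^2(X_k)$, and collecting the $k=1$ replacement together with the dropped last term produces exactly
\[
W_n\le(1+\E_q[V^2(Z)])\textstyle\sum_{k=1}^nV^2(X_k)+\E[V^2(X)]-V_n^2\E_q[V^2(Z)]=n\hat\sigma^2_n(V)/K^2,
\]
so that $\varepsilon_n=(\E[V^2(X)]-V_n^2\E_q[V^2(Z)])/n$ is precisely this bookkeeping residual. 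Enlarging the subtracted term once more gives $\E[\exp(\lambda(f-n\E[g])-\tfrac{\lambda^2}{2}\|g\|_V^2\,n\hat\sigma^2_n(V))]\le1$ for all $\lambda\in\R$.

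Finally I would apply the self‑normalized tail bound of \cite{delapena:klass:lai:2004} (Theorem~2.1, in the version with a free regularizing constant) with $A=f-n\E[g]$, $B^2=\|g\|_V^2\,n\hat\sigma^2_n(V)$ and regularizer $\|g\|_V^2\,n\,y$, so that $B^2$ over the regularizer equals $\hat\sigma^2_n(V)/y$; this yields, for $x>\sqrt2$,
\[
\P\Big(|f-n\E[g]|\ge x\sqrt{n}\,\|g\|_V\sqrt{(\hat\sigma^2_n(V)+y)\big(1+\tfrac12\log(\hat\sigma^2_n(V)/y+1)\big)}\Big)\le e^{-x^2/2},
\]
and dividing by $n$ gives the claimed confidence interval. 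The main obstacle is the almost‑sure inequality $W_n\le n\hat\sigma^2_n(V)/K^2$: proving $\E[V^2(X_k)\mid X_{k-1}=x]\le\E_q[V^2(Z)]V^2(x)$ requires controlling the re‑entry law $\nu\propto\min(q,h)$ — whose second $V$‑moment can a priori exceed $\E_q[V^2(Z)]$ — against the slack left in the Metropolis proposal estimate; the secondary difficulty is the explicit constant bookkeeping in Step~1 so that $\bar\beta(R)$ and $K$ come out in the stated closed form.
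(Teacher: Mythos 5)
Your proposal follows essentially the same route as the paper: verify \eqref{eq:geom}--\eqref{eq:ir} for $V(x)=e^{s|x|}$ with $\beta=(\E_q[e^{(s-\alpha)|Z|}]+1)/2$ and $b=V(x_1)\E_q[V(Z)]$ (and likewise for $V^2$ using $\alpha>2s$), apply the stationary form of Theorem \ref{th:main} with $L_k=\|g\|_V$ and $c_k\le K\|g\|_V$, replace $PV_k^2$ by the observable $V_{k-1}^2\E_q[V^2(Z)]$ with the residual $\varepsilon_n$, and conclude via the self-normalized inequality of \cite{delapena:klass:lai:2004} (the paper invokes Corollary 2.2 rather than Theorem 2.1, an immaterial difference). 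The one point you flag as an obstacle --- that the re-entry law $\nu\propto\min(q,h)$ might violate the budget $\E_q[V^2(Z)]V^2(x)$ because of its normalizing constant --- is real but is left equally unaddressed in the paper, which simply asserts the over-estimate $PV_k^2\le V_{k-1}^2\E_q[V^2(Z)]$.
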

\begin{proof}
The minorization condition \eqref{eq:ir} is satisfied for any small set $\{V(x)\le R\}$ with the constant $c(R)$ in \eqref{eq:c}. Let us check that the Markov chain satisfies the drift condition \eqref{eq:geom} with the Lyapunov function $V(x)=\exp(s|x|)$. To do so, notice that by definition the chain is updated when $k$ increases in two cases corresponding to the first and third items in Figure \ref{fig:3}, referred as cases $A=1$ and $A=0$ respectively. First consider the case $A=1$, then $\E_x[V(X_1)]\le \E_q[V(Z)]$, $x\in E$ and $\E_q[V(Z)]=\E_q[\exp(s|Z|)]$ is finite because $q(x)\le be^{-\alpha |x|}$. Second, consider the case $A=0$ and $|x|> x_1$, then under \eqref{logc} we have
\begin{align*}
\E_x[V(X_1)]&=\E_x[V(X_1)\1_{|X_1|\le |x|}]+ \E_x[V(X_1)\1_{|X_1|> |x|}]\\
&\le V(x)P_x(|X_1|\le |x|)+ \E_x\Big[V(x+Z_1) h(x+Z_1)/h(x) \1_{|x+Z_1|> |x|}\Big]\\
&\le \exp(s|x|)\Big(1+\E_q\Big[(\exp((s-\alpha)(|x+Z_1|-|x|))-1)\1_{|x+Z_1|> |x|}\Big]\Big).
\end{align*} 
If $x>0$, as the integrand is negative we have:
\begin{align*}
\E_q\Big[(\exp((s-\alpha)(|x+Z_1|-|x|))-1)\1_{|x+Z_1|> |x|}\Big]\le \E_q \Big[(\exp((s-\alpha)Z_1)-1)\1_{ Z_1>0}\Big].
\end{align*}
The same reasoning applies if $x<0$ and as $q$ is symmetric we obtain
\begin{align*}
\E_q \Big[(\exp((s-\alpha)(|x+Z_1|-|x|))-1)\1_{|x+Z_1|> |x|}\Big]\le \frac{\E_q [\exp((s-\alpha)|Z_1|)-1 ]}2.
\end{align*}
Finally, when $A=0$ and $|x|\le x_1$ we use the upper bound $\E_x[V(X_1)]\le V(x_1)\E[V(Z)]$.
Thus, the drift condition \eqref{eq:geom} is satisfied by $V(x)=e^{s|x|}$ with $b_1=V(x_1) \E_q[V(Z)]$ and 
$\beta_1=(\E_q[\exp(s-\alpha)|Z|)]+1)/2$. Notice that by similar arguments we also have the drift condition \eqref{eq:geom} satisfied by $V^2$ with $b_2= V^2(x_1)\E_q[V^2(Z)]$ and 
$\beta_2=(1+\E_q[\exp(2s-\alpha)|Z|)])/2$. So   second order moments are finite and the quantities $PV_k^2$ are well defined.  We apply the stationary version of Theorem \ref{th:main} to obtain
\begin{align*}
\E \Big[\exp\Big(\lambda \sum_{k=1}^n(g(X_k)-\E [g])-\frac{\lambda^2}2K^2\sum_{k=1}^n(PV_k^{2 }+V_k^{2})\Big)\Big]\le 1.
\end{align*}
As $PV_k^2$ is not observed, we over-estimate it by $V_{k-1}^2\E[V^2(Z)]$ for $2\le k\le n$. The negligible term $\varepsilon_n$ correspond to the fact that $PV_1^2=\E[V^2(X)]$ is replaced by $V_{n}^2\E_q[V^2(Z)]$ in the expression of $\hat\sigma^2_n(V)$. Finally, we apply Corollary 2.2 of \cite{delapena:klass:lai:2004} to obtain the desired result.
\end{proof}

\section{Discussion and simulations study}\label{sec:disc}

In this section we discuss on the application of Section \ref{sec:ci} along with a simulation study. We would like to stress the fact that the new deviation inequality \eqref{mres} has many other applications in mathematical statistics that will be addressed in future work.\\

{\it Discussion about the Lyapunov function $V$:} Compared with \cite{dedecker:gouezel:2015}, the approach is very dependent on the choice of the Lyapunov function $V$. The constants involved in the drift condition \eqref{eq:geom} can be reasonable if $V$ is well chosen. Moreover, for the MCMC application when $f(X_1,\ldots,X_n)=\sum_{k=1}^n g(X_k)$, it seems more efficient to take $V$ as close to $|g|$ as possible, i.e. as small as possible. Indeed, the larger $V$, the larger $b$ in \eqref{eq:geom}. By a convexity argument, one can actually show that the drift condition \eqref{eq:geom} holds for all Lyapunov's functions $V^p$ with $0<p<1$. So the range of admissible Lyapunov functions is quite large. For instance, in the Metropolis algorithm, any $V(x)=\exp(s|x|)$ for $s<2\alpha$ is admissible. However, we are not aware of any other Lyapunov functions for this algorithm and the Metropolis algorithm seems to have good properties for functions $g$ with exponential shape only. An interesting issue is to know wether, given an unbounded $g$, one can always find an algorithm such that \eqref{eq:geom} is satisfied for some Lyapunov function close to $|g|$.\\

{\it Discussion about the quantitative bounds:} The explicit constant $K$ in Theorem \ref{th:main} is very large. For instance, the contracting normal toy-example considered in \cite{baxendale:2005} satisfies our conditions; it corresponds to the case of an AR(1) model $X_{k}=0.5X_{k-1}+\sqrt{3/4}N_k$ where the $N_k$s are iid standard Gaussian random variables. The stationary solution is the standard gaussian distribution, $g(x)=x$, $\E[g]=0$ and $V(x)=1+x^2$; see \cite{baxendale:2005} and \cite{latuszynski:miasojedow:niemiro:2013} for more details. Then the constant $K=(1+2\bar\beta(R)((R-1)/c(R)-R)/(1- \bar \beta(R))\approx 7,000,000,000$, is larger by 3 orders of magnitude than the constants in \cite{latuszynski:niemiro:2011}. Note that \cite{latuszynski:miasojedow:niemiro:2013} improved our constants by 5 orders of magnitude, i.e. half less. Our bounds are much larger because of the use of the coupling argument, moving from a univariate problem to a bivariate one. It would be interesting to obtain an empirical Bernstein inequality by applying the marginal Nummelin scheme directly on $(X_1,\ldots,X_n)$.\\

More precisely, the enormous constant $K$ is due to the poor irreducibility properties of the toy-example,$$
c(R)=2(\Phi(\sqrt 3 d)-\Phi(\sqrt 3/ d))\qquad \mbox{with}\qquad R=\sqrt{2+(d^2-1)/4};
$$
 see \cite{baxendale:2005} and \cite{latuszynski:niemiro:2011} for details on this elementary computation. As small values of $c(R)$ are the main issue to control the constant, it is worth  to improve the irreducibility properties of the Markov  chain. Regenerative algorithms as the one of Brockwell and Kadane \cite{brockwell:kadane:2005} offer a simple way of increasing $c(R)$. The only drawback is that it requires more steps to generate a trajectory of fixed length. In Figure \ref{fig:2} we compare the distributions of the outputs of the Regenerative, Rejection and Metropolis algorithms based on $10000$ Monte Carlo simulations of $10000$ runs. 
\begin{figure}[h!]
\centering
\includegraphics[width=14cm,height=7cm]{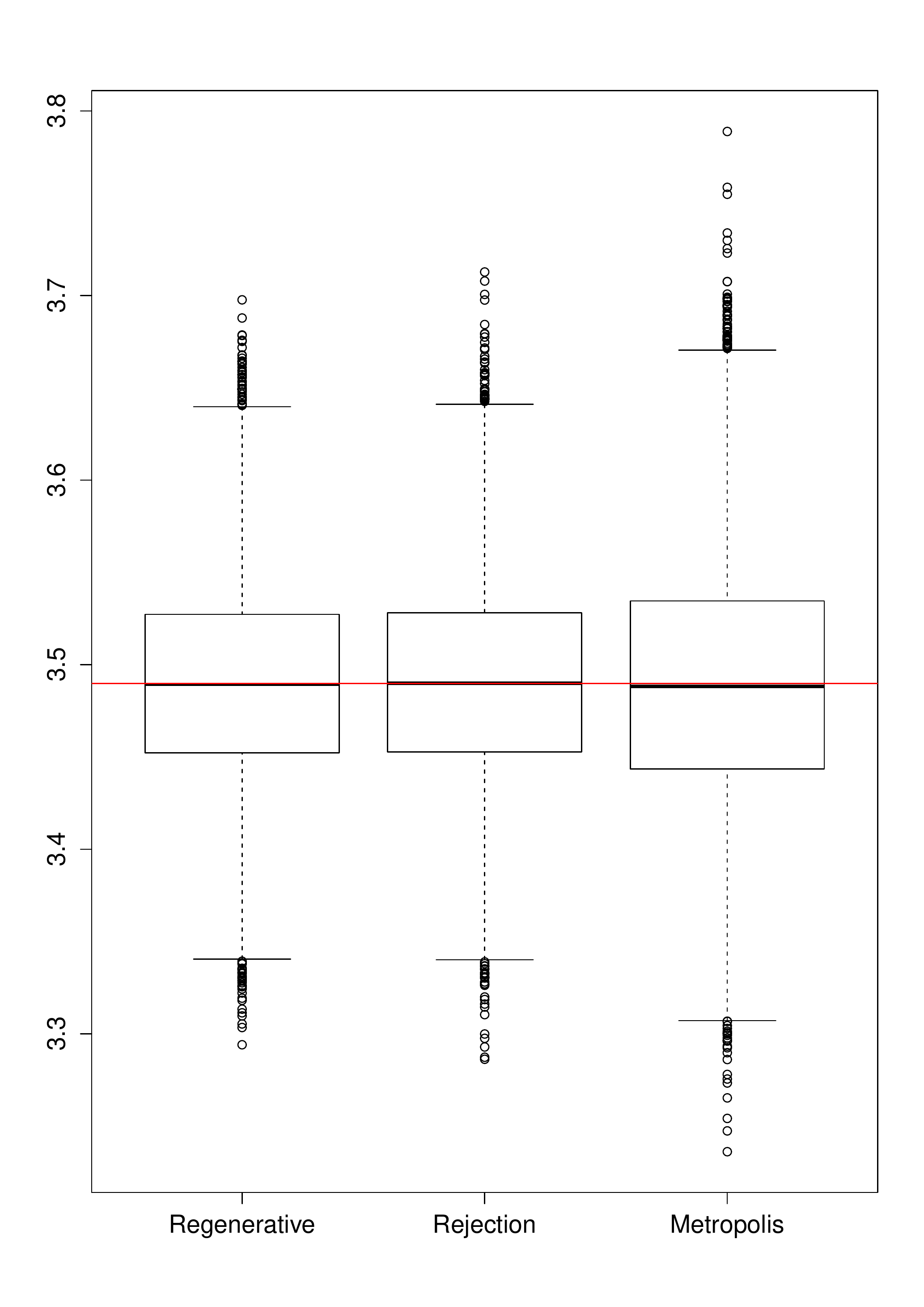}
\caption{Boxplots of the outputs of the Regenerative, Rejection and Metropolis algorithms based on 10000 runs with averaged $n=8308$, $=2532$ (optimal, not tractable in practice) and $=10000$ respectively. In red line is the true value.}
\label{fig:2}
\end{figure} 
The proposal distribution is the standard Gaussian ($d=1$) and $h(x)=e^{-(x-1)^2}$, $x\in\R$. The initial value for the Metropolis algorithm is $0$. The bias issue could  explain why the Metropolis algorithm is slightly over-performed by the Regenerative algorithm. The large number of rejects should explain why the Rejection algorithm is slightly over-performed by the Regenerative algorithm, even if the reject ratio has been optimized in the Rejection algorithm (which is not possible in practice) and not in the Regenerative algorithm $(k=1)$. From \eqref{eq:c}, we have $c(R)\ge \inf_x q(x)/h(x)=(e\sqrt {2\pi})^{-1}\approx 0.15$ that is reasonable. Optimizing in $K$ on $R$ we obtain $K\approx 40,000$. It still requires more than $100*\log(10)*K^2*\log(K)/2\approx 3,800,000,000,000$ runs for obtaining a confident interval of level $0.1$ and of reasonable length $\approx \sigma (V)/10$.\\

{\it Discussion about the median trick:} We based our comparison with previous quantitative bounds of \cite{latuszynski:niemiro:2011} and \cite{latuszynski:miasojedow:niemiro:2013} on confident intervals of level $\sigma (V)/10$. As the previous bounds \cite{latuszynski:niemiro:2011} and \cite{latuszynski:miasojedow:niemiro:2013} are based on the Chebychev inequality
$$
\P\left(\left|\sum_{k=1}^n g(X_k)-\E[g]\right|>\varepsilon\right)\le \frac{\|g\|_V^2\hat\sigma_n^2(V)}{n\varepsilon},
$$
they are not efficient to produce confidence intervals with small levels. To bypass the problem, the median trick of \cite{jerrum:valiant:vizirani:1986} is used. The trick is to approximate $\E[g]$ thanks to the median of $m$ independent approximations $\frac1n\sum_{k=1}^n g(X_{i,k})$, $1\le i \le m$ of MCMC algorithms with the same confidence interval length of level $a<1/2$. Then if $m\ge 2\log(\alpha)/\log(4a(1-a))$ the confidence level of the interval around the median is reduced to $\alpha<a$, see Lemma 4.4 in \cite{latuszynski:niemiro:2011}. However, empirical Bernstein's inequalities as \eqref{mres} show that the interval around the mean of the $m$ independent approximations (based on $mn$ runs) has level $\alpha<a$ when $m\ge \log(\alpha)/\log(a)$.
So, when Theorem \ref{th:main} applies, the mean $\frac1m\sum_{i=1}^m\frac1n\sum_{k=1}^n g(X_{i,k})$ seems to have better concentration properties than the median.
\section*{Acknowledgments}
{I am grateful to an anonymous referee for helpful comments. I would also like to thank Thomas Mikosch for polishing the writing of the final version. Finally, I would like to dedicate this paper to my great supervisor Paul Doukhan.}


\begin{thebibliography}{99}

\bibitem{adamczak:2008}
{\sc Adamczak, R.}\ (2008)
A tail inequality for suprema of unbounded empirical processes with applications to Markov chains. {\em Electron. J. Probab.} {\bf 13 (34)}, 1000--1034.
\bibitem{adamczak:bednorz:2015} 
{\sc Adamczak, R. and Bednorz W.}\ (2015)
Exponential concentration inequalities for additive functionals of Markov chains.
{\em ESAIM: P\&S} {\bf 19} 440--481.
\bibitem{audibert:2007}
{\sc Audibert, J.-Y., Munos, R. and Szepesv\^ari, C.}\ (2007) 
Variance estimates and exploration function in multi-armed bandit. 
{\em CERTIS Research Report 07--31}.


\bibitem{baxendale:2005}
{\sc Baxendale, P. H.}\ (2005)
Renewal theory and computable convergence rates for geometrically ergodic
Markov chains. 
{\em Ann. Appl. Probab.} {\bf 15} 700--738.
\bibitem{bertail:clemencon:2010}
{\sc Bertail, P. and Cl\' emen\c{c}on, S.}\ (2010) 
Sharp bounds for the tails of functionals of Markov chains. 
{\em Theory Probab. Appl.} {\bf 54}, 505--515.
\bibitem{boucheron:lugosi:massart:2013}
{\sc Boucheron, S., Lugosi, G. and Massart, P.}\ (2013) 
{\em Concentration Inequalities: A Nonasymptotic Theory of Independence.} Oxford University Press.
\bibitem{brockwell:2006}
{\sc Brockwell, A. E.}\ (2006). 
Parallel Markov chain Monte Carlo simulation by pre-fetching. 
{\em Journal of Computational and Graphical Statistics}, {\bf 15}(1), 246--261.
\bibitem{brockwell:kadane:2005}
{\sc Brockwell, A. E. and Kadane, J. B.}\ (2005)
Identification of regeneration times in MCMC simulation, with application to adaptive schemes. {\em Journal of Computational and Graphical Statistics}, {\bf 14} (2).



\bibitem{dedecker:gouezel:2015}
{\sc  Dedecker J. and Gou\"ezel S. }\ (2015)
Subgaussian concentration inequalities for geometrically ergodic Markov chains.
{\em Electronic Communications in Probability} {\bf 20}, 1--12.
\bibitem{delapena:klass:lai:2004}
{\sc de la Pena, V. H., Klass, M. J. and Lai, T. L.}\ (2004) 
Self-normalized processes: exponential inequalities, moment bounds and iterated logarithm laws. 
{\em Ann. Probab}, {\bf 32}, 1902-1933.

\bibitem{djellout:guillin:wu:2004}
{\sc Djellout, H., Guillin, A. and Wu, L.}\  (2004)
Transportation cost-information inequalities and applications to random dynamical systems and diffusions.
{\em Ann. Probab.} {\bf 32 (3B)}, 2702--2732.




\bibitem{feigin:tweedie:1985}
{\sc Feigin, P.D. and Tweedie, R.L.} (1985) Random coefficient
autoregressive processes: a Markov chain analysis of stationarity
and finiteness of moments. {\em J. Time Series Anal.} {\bf 6},
1--14.


\bibitem{gyori:paulin:2012}
{\sc Gyori, B. M. and Paulin, D.}\ (2012). Non-asymptotic confidence intervals for MCMC in practice. arXiv preprint arXiv:1212.2016.


\bibitem{hairer:mattingly:2011}
{\sc Hairer, M. and Mattingly, J. C.}\ (2011) 
Yet another look at Harris' ergodic theorem for Markov chains.
In Seminar on Stochastic Analysis, {\em Random Fields and Applications}, Springer Basel, {\bf VI} 109--117. 



\bibitem{ibragimov:1962}
{\sc Ibragimov, I. A.}\ (1962)
Some limit theorems for stationary processes. 
{\em Theory Probab. Appl.} {\bf 7}, 349--382.


\bibitem{jerrum:valiant:vizirani:1986}
{\sc Jerrum M.R., Valiant L.G. and Vizirani V.V.}\ (1986) 
Random generation of combinatorial structures from a uniform distribution,
{\em Theoret. Comput. Sci.} {\bf 43}  169--188.

\bibitem{joulin:ollivier:2010}
{\sc Joulin, A. and Ollivier, Y.}\ (2010)
Curvature, concentration and error estimates for Markov chain Monte Carlo. 
{\em Ann. Probab.} {\bf 38 (6)}, 2418--2442.






\bibitem{latuszynski:miasojedow:niemiro:2013}
{\sc {Latuszynski}, K., Miasojedow, B. and Niemiro, W.}\ (2013). 
Nonasymptotic bounds on the estimation error of MCMC algorithms. 
{\em Bernoulli}, {\bf 19 (5A)}, 2033--2066.
\bibitem{latuszynski:niemiro:2011}
{\sc Latuszynski, K. and Niemiro, W.}\ (2011). 
Rigorous confidence bounds for MCMC under a geometric
drift condition.{\em J. Complexity} 27 23--38.



\bibitem{marton:1996a}
{\sc Marton, K.}\ (1996)
A measure concentration inequality for contracting Markov chains. 
{\em Geom. Funct. Anal.} {\bf 6 (3)}, 556--571.

\bibitem{mcdiarmid:1989}
{\sc McDiarmid C.}\ (1989)
On the method of bounded differences, in: Surveys of Combinatorics, {\em Siemons J. (Ed.)}, {\bf Lect. Notes Series 141}, London
Math. Soc.

\bibitem{meyn:tweedie:1993}
{\sc Meyn, S.P. and Tweedie, R.L. }\ (1993) {\em Markov Chains and
  Stochastic Stability}. Springer, London.

\bibitem{mengersen:tweedie:1996}
{\sc Mengersen, K. L. and Tweedie, R. L.}\ (1996)  
Rates of convergence of the Hastings and Metropolis algorithms.
{\em The Annals of Statistics}, {\bf 24}, 101--121.
  
 \bibitem{mykland:tierney:yu:1995}
 {\sc Mykland, P., Tierney, L. and Yu, B.}\ (1995)
 Regeneration in Markov chain samplers. 
 {\em Journal of the American Statistical Association}, {\bf 90}, 233--241.
 
 
 
\bibitem{nagaev:1963} 
{\sc Nagaev, A. V.}\ (1963)
Large deviations for a class of distributions. 
{\em Limit Theorems of the Theory of Probability} 71--88.


\bibitem{rio:2000}
{\sc Rio, E.}\ (2000)
In\'egalit\'es de Hoeffding pour les fonctions lipschitziennes de suites d\'ependantes.  {\em C. R. Acad. Sci. Paris S\'er. I Math.} {\bf 330}, 905--908.

\bibitem{roberts:rosenthal:2004}
{\sc Roberts, G. O. and Rosenthal, J. S.}\ (2004)
General state space Markov chains and MCMC algorithms. 
{\em Probability Surveys}, {\bf 1}, 20-71.

\bibitem{rosenthal:2003}
{\sc Rosenthal, J. S.}\ (2003)
Asymptotic variance and convergence rates of nearly-periodic Markov chain Monte Carlo algorithms. 
{\em JASA}, {\bf 98 (461)}, 169-177.


\bibitem{samson:2000}
{\sc Samson, P.-M.}\ (2000)
 Concentration of measure inequalities for Markov chains and $\Phi$-mixing processes.
{\em Ann. Probab.} {\bf 28 (1)}, 416--461.




\bibitem{villani:2009}
{\sc Villani, C.}\ (2009)
{\em Optimal transport, old and new.} Springer-Verlag, Berlin, 2009. 



\bibitem{wintenberger:2015}
{\sc Wintenberger, O.}\ (2015)
Weak transport inequalities and applications to exponential inequalities and oracle inequalities. {\em  EJP}, 20, 114.

\end{thebibliography}
\end{document}